\documentclass[11pt]{amsart}
\usepackage{amsmath,amssymb,amsfonts,amsthm,amscd,indentfirst}

\numberwithin{equation}{section}
\usepackage{xcolor}

\def\O{\Omega}

\def\<{\langle}
\def\>{\rangle}
\def\s{\sigma}
\def\G{\Gamma}

\def\uv{\int_{\{u<v\}}}
\def\puv{\int_{\partial\{u<v\}}}
\def\ib{\int_B}

\def\is{\int_0^1}

\def\al{\alpha}
\def\bet{\beta}
\def\cc{\mathbb{C}}

\newtheorem{theorem}{Theorem}[section]
\newtheorem{proposition}[theorem]{Proposition}
\newtheorem{corollary}[theorem]{Corollary}
\newtheorem{lemma}[theorem]{Lemma}
\newtheorem{remark}[theorem]{Remark}
\newtheorem{example}[theorem]{Example}

\newtheorem{definition}[theorem]{Definition}

\newcommand{\g}{\gamma}
\newcommand{\lbr}{\lbrace}
\newcommand{\rbr}{\rbrace}

\newcommand{\rr}{\mathbb{R}}
\newcommand{\C}{\mathbb{C}}

\newcommand{\rn}{\mathbb{R}^n}

\newcommand{\pa}{\partial}

\begin{document}
\title[The integral comparison principle]{Hessian operators, null Lagrangians and the integral comparison principle}
\author{Sławomir Dinew, Szymon Myga and Marcin Sroka}
\address{Faculty of mathematics and computer science, Jagiellonian University, \L ojasiewicza 6, 30-348, Krak\'ow, Poland}
\email{slawomir.dinew@im.uj.edu.pl}
\address{Institute of Mathematics, Gdańsk University, ul. Wita Stwosza 57, 80-308 Gdańsk, Poland}
\email{szymon.myga@ug.edu.pl}
\address{Faculty of mathematics and computer science, Jagiellonian University, \L ojasiewicza 6, 30-348, Krak\'ow, Poland}
\email{marcin.sroka@uj.edu.pl}
\thanks{The research of the first named author is supported in part by National Science Center of Poland grant no. 2025/57/B/ST1/00367. The research of the last named author is supported in part by National Science Center of Poland grant no. 2025/59/D/ST1/02231. Part of the research was conducted while the second named author was visiting Jagiellonian Univeristy and he wishes to express his gratitude
for the hospitality.}

\begin{abstract}
We investigate the conditions under which a Hessian operator satisfies the integral comparison principle. We establish a rather unexpected connection with the notion of a null Lagrangian from the calculus of variations. Under mild conditions we obtain a complete classification, showing that the only operators for which the integral comparison principle holds are linear combinations of $k$-Hessians and, in the homogeneous case, precisely the $k$-Hessians themselves. An analogous result is established for the complex Hessian operator, revealing in particular substantial obstructions to the development of a genuine pluripotential theory for the $\mathcal{J}$-equation.
\end{abstract}
\keywords {fully nonlinear elliptic equations; Hessian equations; integral comparison principle}
\subjclass{35J96; 32W20; 35B51; 32U35}
 
\maketitle
\section{Introduction}
Half a century ago in \cite{BT76,BT82} Bedford and Taylor developed a powerful nonlinear potential theory, nowadays known as {\it pluripotential theory}, associated to the complex Monge-Amp\`ere operator. A crucial feature of it is that one can {\it define} the nonlinear operator on all locally bounded plurisubharmonic functions without requiring any further smoothness. Applications of Bedford-Taylor ideas can be found in complex analysis, dynamics, differential, and algebraic geometry. 

It is thus a very natural question to investigate which nonlinear operators admit such a theory. In the complex setting B\l ocki \cite{Bl06} initiated the creation of a non-linear potential theory for the $\sigma_k^{\C}$ operators (see the preliminaries for all notions used in this paper). An analogous theory was developed for (real) $\sigma_k$ operators by Trudinger and Wang \cite{TW97, TW99, TW02}. Compared to these early developments, further progress  turned out to be remarkably slow.

In all the aforementioned cases, a very crucial tool in the developed theories was the following generalization of the maximum principle which we dub {\it the integral comparison principle or ICP} in short.

Given a Hessian operator, 
\begin{align*}
    u\longmapsto F(D^2u)
\end{align*} acting on weakly {\it admissible} functions defined in some domain $\O$ the integral comparison principle holds if whenever $u$ and $v$ are weakly admissible and the set $\lbr u<v\rbr$ is relatively compact in $\O$ then
\begin{equation}\label{ICP}
 \int_{\lbr u<v\rbr}F(D^2v)\leq \int_{\lbr u<v\rbr}F(D^2u) .   
\end{equation}

Let us remark that under very mild additional assumptions on the operator $F$ and the ellipticity set $\Gamma$ such an inequality immediately yields, for example, the uniqueness of the weak solutions of the Dirichlet problem for $F$ with continuous boundary data.

The applications of the integral comparison principle in fact go far beyond the aforementioned uniqueness and there seems to be agreement among experts that construction of a nonlinear analog of  pluripotential theory for an operator $F$ is hardly possible {\it without} having some analog of ICP.

Even before defining {\it weak solutions} associated to a pair $(F,\Gamma)$ it is thus desirable to check whether ICP holds in the smooth setting i.e. for a pair of smooth, $\Gamma$ admissible functions $u$ and $v$. Note that in such a setting the boundary of $\lbr u<v\rbr$ need not be smooth. Provided though that the set is relatively compact in the domain $\O$ for a generic, small, $\varepsilon>0$ the set $\lbr u+\varepsilon<v\rbr$ has a smooth boundary by Sard's theorem. Note that the addition of a constant $\varepsilon$ does not change neither admissibility nor the value of $F$ as it operates on the Hessian of a function. 

With all this in mind, we define the {\it smooth integral comparison principle}.
\begin{definition}[Smooth integral comparison principle]\label{ICPdef}
 We say that the operator $F$ satisfies a smooth integral comparison principle provided for every domain $\O \Subset \rn$ and any admissible $\mathcal{C}^2(\O)$ functions $u$ and $v$ such that $\{u < v \} \Subset \O$ inequality \eqref{ICP} holds.
\end{definition}

Hence, the following question arises.

\medskip
{\bf Question}: {\it Assume $F$ is a nonlinear elliptic operator of Hessian type and $\Gamma$ is an ellipticity set associated to $F$. Describe the pairs $(F,\Gamma)$ for which the smooth ICP holds.}
\medskip

Not much is known about this problem both in the real and complex setting. By an ad hoc argument, the first named author in \cite{Di22} has shown that among the operators 
\begin{align*} 
\mathcal M^{\cc}_p(D_{\cc}^2u):=\prod_{1\leq j_1<j_2<\cdots<j_p\leq n} \left(\lambda_{j_1}(D^2_\C u)+\cdots+\lambda_{j_p}(D^2_\C u)\right)
\end{align*} only $\mathcal M_1^\cc$ and $\mathcal M_n^\cc$ admit ICP. Note that these are just the well known complex Laplace and Monge-Amp\`ere operators.  This result indicates that ICP forces some stringent conditions on nonlinearity of $F$. The aim of this note is to provide a rather complete answer to the above question. 

Before proceeding further, let us make the following observation that is crucial in our reasoning. Fix a symmetric matrix $A\in\Gamma$ and the associated (admissible) quadratic polynomial $q_A:\rn\ni x\longmapsto x^TAx$. For a fixed domain $\O\Subset\rn$ take any non-positive test function $\varphi$ in $\O$. After multiplying $\varphi$ by a small positive constant, if necessary, we can assume that $q_A+\varphi$ is admissible since $\Gamma$ is open. Assume, for simplicity, that the boundary of the set $\lbr{q_A+\varphi<q_A\rbr}$ is smooth. Applying ICP for the pair $q_A+\varphi$ and $q_A$ leads to
\begin{align}\label{similartoquasiconvex}
\int_{\O}F(A)=\int_{\lbr q_A+\varphi<q_A\rbr}F(D^2q_A)+\int_{\O\setminus\lbr{q_A+\varphi<q_A\rbr}}F(D^2q_A)
\\ \nonumber \leq \int_{\lbr q_A+\varphi<q_A\rbr}F(D^2(q_A+\varphi))+\int_{\lbr\varphi=0\rbr}F(D^2(q_A+\varphi))=\int_\O F(A+D^2\varphi).\end{align}
The inequality above resembles the notion of {\it quasiconvexity} from the calculus of variations, cf. \cite{BCO81}. Thus, ICP for the pair $(F,\Gamma)$ {\it imposes} such a quasiconvexity type inequality. Hence, it is very natural to try to compare the validity of ICP with other convexity notions appearing in the calculus of variations. Indeed, we establish strong relations between operators admitting ICP and {\it null Lagrangians}. Our main result is the following theorem.

\begin{theorem}\label{main}
Assume that $F$ is a Hessian operator satisfying \eqref{ICP} that is elliptic on an open {\bf convex} ellipticity set $\Gamma$. The pair $(F,\Gamma)$ satisfies the smooth integral comparison principle if and only if for some real constants $c_1,\cdots, c_n$ we have
\begin{align}\label{affcomb}
    F(D^2u)=\sum_{j=1}^nc_j\sigma_j(D^2u).
\end{align}
\end{theorem}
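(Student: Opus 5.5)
The plan has two directions. For sufficiency, assume $F=\sum_j c_j\sigma_j$ and let $u,v$ be admissible with $\{u<v\}\Subset\O$. By the remark preceding Definition~\ref{ICPdef} we may replace $u$ by $u+\varepsilon$ and assume that $U=\{u<v\}$ has smooth boundary, on which $u=v$. The key fact is that each $\sigma_j(D^2w)$ is a divergence: with $T_{j-1}$ the Newton tensor (divergence free),
\[
\sigma_j(D^2w)=\tfrac1j\,\partial_i\bigl(T_{j-1}^{ik}(D^2w)\,\partial_k w\bigr).
\]
Hence $\int_U\sigma_j(D^2w)$ depends only on the $(j-1)$-jet of $w$ on $\partial U$, so it is a null Lagrangian. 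Concretely,
\[
\int_U\sigma_j(D^2v)-\int_U\sigma_j(D^2u)=\int_0^1\int_U \partial_t\,\sigma_j\bigl(D^2(u+t(v-u))\bigr)\,dt=\int_0^1\int_U T_{j-1}^{ik}(D^2w_t)\,\partial_{ik}(v-u),
\]
where $w_t=u+t(v-u)$. Integrating by parts twice moves everything to $\partial U$. There $v-u=0$ and $\nabla(v-u)=|\nabla(v-u)|\nu$ with $\nu$ the outer normal, which gives
\[
\int_0^1\int_{\partial U}T^{\nu\nu}_{j-1}(D^2w_t)\,\partial_\nu(v-u)\,dS\,dt .
\]
Here $\partial_\nu(v-u)\le 0$. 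The path $w_t$ stays admissible by convexity of $\Gamma$. After summing with the $c_j$, the combination $\sum c_jT_{j-1}^{\nu\nu}=\partial F/\partial r_{\nu\nu}$ is $\ge0$ by ellipticity of $F$ on $\Gamma$. The difference is therefore $\le0$, which is ICP. Convexity of $\Gamma$ is used exactly here, to keep the segment between $u$ and $v$ admissible.

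For necessity, I would start from the quasiconvexity-type inequality \eqref{similartoquasiconvex}: $\int_\O F(A)\le\int_\O F(A+D^2\varphi)$ for every $A\in\Gamma$ and every small nonpositive test function $\varphi$. Sard's theorem and approximation remove the smooth-boundary assumption. Next I apply ICP to the pair $(q_A,\,q_A+\varphi)$ with $\varphi\ge0$, so that $\{q_A<q_A+\varphi\}=\{\varphi>0\}$. This gives the reverse inequality, and hence
\[
\int_\O F(A+D^2\varphi)=\int_\O F(A)
\]
for all small $\varphi$ of fixed sign. To drop the sign condition, write $\varphi=\varphi_1-\varphi_2$ with $\varphi_i\ge0$ of nested supports. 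Alternatively, differentiate in $s$ along $s\varphi$ with $\varphi\ge0$, and then along general perturbations around non-constant admissible backgrounds $q_A+\psi$. The second variation then gives
\[
\int F_{ij,kl}(A)\,\partial_{ij}\varphi\,\partial_{kl}\varphi=0
\]
for all $\varphi$. The same identity should hold at non-quadratic backgrounds, by comparing $w$ with $w+\varphi$ for admissible $w$. This says that $F(D^2w)$ is a null Lagrangian on $\Gamma$, i.e.\ its Euler--Lagrange expression $\partial_{ij}\partial_{ij}F(D^2w)$ vanishes identically for admissible $w$.

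The final step is to classify second-order null Lagrangians depending only on the Hessian. By the Ball--Currie--Olver theory \cite{BCO81}, such a Lagrangian is locally an affine combination of the minors of $D^2w$. Since $F$ depends only on the eigenvalues, it is invariant under $A\mapsto O^TAO$. The only $O(n)$-invariant linear combinations of minors are combinations of $\sigma_j$ plus a constant $c_0$, and since $\Gamma$ is connected this local representation is global on $\Gamma$. The constant $c_0$ is incompatible with ICP: take $u=v-\delta$ on a ball with $\{u<v\}$ large and compare with shrinking sets, or use a normalization in the notion of Hessian operator. I expect the main obstacle to be this step. It requires upgrading the one-sided inequalities obtained at quadratic backgrounds to the full null Lagrangian identity at arbitrary admissible backgrounds, and then invoking, or reproving in the symmetric-Hessian setting, the Ball--Currie--Olver characterization. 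I would first try to get the second-variation identity at constant Hessians, which forces $F$ to be rank-one affine in suitable directions. I would then bootstrap by induction on degree to show that all derivatives of $F$ at each $A\in\Gamma$ are combinations of minors.
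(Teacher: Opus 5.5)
Your sufficiency direction is the paper's argument and is fine. The necessity direction breaks at its first step. The two applications of ICP concern perturbations of opposite sign, so they are not reverse inequalities for the same $\varphi$. For $\psi\ge 0$ they only give
\[
\int_\O F(A+D^2\psi)\le\int_\O F(A)\le\int_\O F(A-D^2\psi),
\]
that is, $g(s)=\int_\O F(A+sD^2\psi)$ is nonincreasing, not constant. Splitting $\varphi=\varphi_1-\varphi_2$ cannot help, since $F$ is nonlinear.

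Everything after this step is either derived from that unjustified equality or only asserted. At a non-quadratic admissible background $w$, the first variation $\int_\O (F^{ij}(D^2w))_{ij}\varphi$ is only known to be $\le 0$ for $\varphi\ge 0$. So there is no second-variation identity there and no null Lagrangian property. The appeal to \cite{BCO81}, together with the unproved claim that the only $O(n)$-invariant combinations of minors are the $\s_j$, is exactly the part you leave open.

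The missing idea is how to turn one-sided information into an equality. The paper does this pointwise:
\begin{enumerate}
\item Monotonicity at an arbitrary admissible $u$ gives $(F^{ij}(D^2u))_{ij}\le 0$.
\item Expand this with $F=f\circ\s$ at $0$ for $u=x^TDx+\frac\varepsilon2(\frac{a_i}{2}x_i^2)^2$. The third derivatives vanish, and what remains is linear in $\varepsilon$, whose sign is free. This yields $f_{\al\bet}\s^{ii}_\al\s^{jj}_\bet a_ia_j=0$.
\item Lemma~\ref{vdmlike} (invertibility of $[\s_i^{jj}(D)]$ for distinct eigenvalues) then gives $f_{\al\bet}=0$, with no use of \cite{BCO81}.
\end{enumerate}

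Your setup can be rescued by a similar device, but you have to supply it:
\begin{enumerate}
\item At a constant Hessian, $g'(0)=F^{ij}(A)\int_\O\psi_{ij}=0$. A nonincreasing $C^2$ function with $g'(0)=0$ must have $g''(0)=0$.
\item Polarization and oscillating $\psi$ then give $\frac{d^2}{dt^2}F(A+t\xi\xi^T)|_{t=0}=0$.
\item Since every $\s_k(D+t\xi\xi^T)$ is affine in $t$, this reads $f_{\al\bet}\s^{ii}_\al\s^{jj}_\bet\xi_i^2\xi_j^2=0$.
\item This quadratic form vanishes on the whole nonnegative orthant, so it vanishes identically. That is the same matrix identity, and the same lemma finishes.
\end{enumerate}

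Finally, your claim that a constant $c_0$ is incompatible with ICP is wrong. Replacing $F$ by $F+c_0$ adds $c_0|\{u<v\}|$ to both sides of \eqref{ICP}, so constants always satisfy it. What ICP actually forces, and what the paper's proof concludes, is that $f$ is affine in $(\s_1,\dots,\s_n)$. The constant can be dropped only by a normalization.
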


Morally, the same argument can be applied in the complex setting to yield the following analog of the above theorem. 

\begin{corollary}\label{maincomplex} Any complex Hessian operator elliptic in an open {\bf convex} ellipticity set $\Gamma$ satisfying \eqref{ICP} is a (real) linear combination of operators $\sigma_k^{\C}$.
\end{corollary}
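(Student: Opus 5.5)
The plan is to mimic the real argument of Theorem \ref{main} after identifying the complex setting with a real one. A complex Hessian operator is $u\mapsto F(D^2_{\C}u)$, where $D^2_{\C}u=(u_{j\bar k})$ is the complex Hessian, a Hermitian $n\times n$ matrix. Let $\Gamma$ be an open convex set of Hermitian matrices (invariant under unitary conjugation), and let $F$ satisfy ICP for $C^2$ admissible functions. The argument has three steps.

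\textbf{Step 1: quasiconvexity-type inequality.} Exactly as in \eqref{similartoquasiconvex}, take $A\in\Gamma$, the pluriquadratic polynomial $q_A(z)=z^*Az$, and small nonpositive test functions $\varphi$. ICP for the pair $q_A+\varphi$, $q_A$ gives
\[
\int_\O F(A)\le\int_\O F\bigl(A+D^2_{\C}\varphi\bigr),
\]
first for a generic $\varepsilon$-shift by Sard's theorem, and then in general by passing to the limit.

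\textbf{Step 2: the reverse inequality.} I also want to exploit ICP for the pair $q_A$, $q_A+\varphi$ with $\varphi\ge0$. This is possible if I use functions that are not compactly supported but agree with $q_A$ outside a compact set: take $v=q_A+\varphi-\delta$ and compare on $\{q_A<v\}$. Combining the two inequalities, I expect to get that
\[
\varphi\longmapsto\int F\bigl(A+D^2_{\C}\varphi\bigr)
\]
is constant under compactly supported perturbations. In the language of the calculus of variations, $F(D^2_{\C}\cdot)$ is then a null Lagrangian on $\Gamma$, and convexity of $\Gamma$ lets me connect perturbations along segments.

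\textbf{Step 3: classification of null Lagrangians.} Write $z=x+iy$, so that $D^2_{\C}u$ is a real linear image $L(D^2_{\mathbb R}u)$ of the real $2n\times 2n$ Hessian; in fact it is $\tfrac14$ of the $J$-invariant part. Then $G:=F\circ L$ is a second-order null Lagrangian on the real side. By the known classification, which I assume the paper uses for Theorem \ref{main}, $G$ is an affine combination of minors of the real Hessian. Alternatively, I would take first variations: the Euler--Lagrange expression must vanish identically. This forces $\partial_{ij}\partial_{kl}\bigl(\partial F/\partial a_{ij}\bigr)$ to vanish in the symmetrized sense, which yields polynomiality and a minor structure in the entries of $A$.

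The next task is to determine which polynomials in the Hermitian entries $a_{j\bar k}$ arise. They are those that are null Lagrangians for the operator $\partial\bar\partial$, and these are precisely the linear combinations of complex minors. The reason is that $\partial\bar\partial\varphi\wedge T$ is exact when $T$ is a closed form built from the constant form $i\partial\bar\partial q_A$. Unitary invariance of $F$, which comes from the Hessian-operator hypothesis, then reduces these combinations to $\sum_k c_k\sigma_k^{\C}$ plus a constant; the constant can be absorbed, or ruled out by normalization.

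\textbf{Main obstacle.} The hardest part is the passage from the real classification to the complex one. I must show that $F\circ L$ being a null Lagrangian forces $F$ to be a polynomial of degree at most $n$ built from complex minors, rather than from general real $2n\times 2n$ minors. The second-order condition restricted to $J$-invariant directions may be weaker than the full real condition, so I would verify it directly. My approach is to compute the first variation with $\varphi$ supported in a small ball and use the rank-one Hermitian perturbations $\xi\xi^*$ (the complex analogue of rank-one convexity). This gives affineness of $t\mapsto F(A+t\xi\xi^*)$ along every complex rank-one direction. A polynomial with that property that is invariant under unitary conjugation must be a combination of $\sigma_k^{\C}$: diagonalize, observe that the resulting function is affine in each eigenvalue separately and symmetric, and conclude it is a combination of elementary symmetric polynomials.
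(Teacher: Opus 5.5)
The gap is in Step~2. Your two inequalities concern perturbations of opposite sign. Step~1 gives $\int F(A)\le\int F(A+D^2_{\C}\varphi)$ for $\varphi\le0$, and the $\delta$-shift gives $\int F(A+D^2_{\C}\varphi)\le\int F(A)$ for $\varphi\ge0$. For a fixed $\psi\ge0$ they only say $\int F(A+D^2_{\C}\psi)\le\int F(A)\le\int F(A-D^2_{\C}\psi)$, i.e.\ that $t\mapsto\int F(A+tD^2_{\C}\psi)$ is non-increasing. This is the same monotonicity the paper starts from, and nothing in it forces constancy. So the assertion that $F(D^2_{\C}\cdot)$ is a null Lagrangian is unsupported. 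Step~3 rests on it, and so does the Euler--Lagrange \emph{equation} your first-variation argument in the last paragraph implicitly uses. Moreover, at the quadratic base $q_A$ the first variation vanishes identically, since $F^{i\bar j}(A)$ is constant and $\int\varphi_{i\bar j}=0$. So it carries no rank-one information there.

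What is missing is a way to turn the one-sided condition into an identity. The paper does it as follows.
\begin{enumerate}
\item Monotonicity at an arbitrary admissible base $u$ yields the Euler--Lagrange \emph{inequality} $(F^{i\bar j}(u))_{i\bar j}\le0$ pointwise.
\item Take $u(z)=z^*Dz+\frac{\varepsilon}{2}\bigl(\frac{a_i}{2}|z_i|^2\bigr)^2$ and evaluate at $0$. Every term containing third derivatives vanishes, and what is left is $\varepsilon$ times a fixed contraction of the second derivatives of $F$ at $D$ with the quartic jet.
\item Since $u$ is admissible near $0$ for both signs of $\varepsilon$, equality follows.
\end{enumerate}
An alternative works at the quadratic base. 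Monotonicity of $\Phi(t)=\int F(A+tD^2_{\C}\psi)$ together with $\Phi'(0)=0$ forces $\Phi''(0)=0$ for $\psi\ge0$. Polarization, using that every test function is a difference of nonnegative ones, and then Plancherel give $\frac{d^2}{dt^2}F(A+t\zeta\zeta^*)|_{t=0}=0$ for all $\zeta\in\C^n$.

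Once this is supplied, your endgame is correct and leaner than the paper's. The choice $a=e_j$ alone gives $\partial_{\lambda_j}^2\tilde f=0$ for every $j$. On the convex set $\G$ this makes $\tilde f$ multi-affine, arguing slice by slice since sections of a convex open set are intervals. Symmetry then forces the coefficient of $\prod_{i\in S}\lambda_i$ to depend only on $|S|$, giving $c_0+\sum_k c_k\s_k^{\C}$. This avoids the factorization $F=f\circ\s$, the smoothness theorem it requires, and Lemma~\ref{vdmlike}. It also makes the detour through the real Ball--Currie--Olver classification unnecessary.
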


Among Hessian operators the class of {\it homogeneous} ones, i.e. satisfying $F(tA)=t^{\alpha}F(A)$ for any $t>0$ and some $\alpha>0$, is of special interest. In this class the natural ellipticity sets $\Gamma$ are {\it cones}. In this homogeneous case our main result implies the following complete characterization.
\begin{proposition}\label{homogeneous}
 Let $(F,\Gamma)$ be a pair of a homogeneous elliptic Hessian operator $F$ and an associated open {\bf convex} ellipticity set $\Gamma$. If the pair satisfies the smooth integral comparison principle, then
 \begin{align}
     F=c_k\sigma_k
 \end{align} for some $k\in\lbr1,\cdots,n\rbr$, $c_k$ is a non-negative constant and 
 \begin{align}
     \Gamma=\Gamma_k:=\lbr A\ |\ \sigma_j(A)>0,\ j=1,\cdots, k\rbr.
 \end{align}
 The same holds for complex Hessian operators - then $F$ is a multiple of $\sigma_k^{\C}$.   
\end{proposition}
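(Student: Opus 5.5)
By Theorem \ref{main}, the smooth integral comparison principle forces $F=\sum_{j=1}^n c_j\sigma_j$ on $\Gamma$. The plan is to combine this with homogeneity to isolate a single term, and then use ellipticity together with convexity of $\Gamma$ to identify the cone and the sign of the constant.

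First, fix $A\in\Gamma$. Since $\Gamma$ is a cone, $tA\in\Gamma$ for all $t>0$. Homogeneity then gives
\begin{align*}
t^{\alpha}\sum_j c_j\sigma_j(A)=\sum_j c_j t^j\sigma_j(A)\quad\text{for all } t>0.
\end{align*}
Since $t\mapsto t^\alpha$ and the monomials $t^j$ are linearly independent on $(0,\infty)$, for each $A$ at most one of the quantities $c_j\sigma_j(A)$ can be nonzero, namely the one with $j=\alpha$. Each $\sigma_j$ is a nonzero polynomial on the open set $\Gamma$, so it cannot vanish identically there. Hence $c_j=0$ for every $j\neq\alpha$, and $\alpha=k$ is an integer (unless $F\equiv 0$, in which case $c_k=0$ trivially). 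This gives $F=c_k\sigma_k$.

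Next come the sign of $c_k$ and the identification of $\Gamma$. Ellipticity means $F$ is increasing in the positive semidefinite directions on $\Gamma$, so $\partial F/\partial A_{ii}\ge 0$. Evaluate at diagonal matrices in $\Gamma$. Since $\Gamma$ is an ellipticity cone, it contains the positive cone or at least $I$ (after normalizing), so we may look near $A=I$. There $\partial\sigma_k/\partial\lambda_i>0$, and therefore $c_k\ge 0$. For $c_k>0$, the condition $\partial\sigma_k/\partial\lambda_i\geq 0$ (with nondegeneracy, since $F$ is genuinely elliptic on $\Gamma$) holds on $\Gamma$. Since $\Gamma$ is an open convex cone containing $\Gamma_n$, the standard result of Caffarelli--Nirenberg--Spruck shows that the connected component of $\{\sigma_k>0\}$ containing $I$ is $\Gamma_k$. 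Moreover, $\Gamma_k$ is the largest open convex cone containing $I$ on which $\sigma_k$ is elliptic: on its boundary $\sigma_k=0$, or some $\sigma_j$ with $j<k$ vanishes and ellipticity degenerates. This yields $\Gamma\subseteq\Gamma_k$. Whether equality must hold depends on the paper's convention that ellipticity sets are maximal. If they are, $\Gamma=\Gamma_k$. Otherwise I expect the authors show that any admissible ellipticity set for $\sigma_k$ with the required structure is $\Gamma_k$.

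The main obstacle is this last step: pinning down $\Gamma$ exactly, rather than merely $F$. It relies on the precise definition of an ellipticity set, which presumably requires $\Gamma$ to be maximal or to satisfy $\Gamma+\Gamma_n\subset\Gamma$ together with positivity of $F$. My first attempt would be to argue that $F>0$ on $\Gamma$ and that $\Gamma$ is the component of $\{F>0\}$ containing $\Gamma_n$, then invoke the description of $\Gamma_k$ as that component.

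The complex case is identical. Apply Corollary \ref{maincomplex} in place of Theorem \ref{main}, and replace $\sigma_j$ by $\sigma_j^{\C}$ evaluated on the eigenvalues of the complex Hessian.
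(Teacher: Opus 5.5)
Your reduction to $F=c_k\sigma_k$ is correct, and it is essentially all the paper indicates (it only says ``our main result implies''). Theorem \ref{main} gives $F=\sum_j c_j\sigma_j$; its proof actually gives an affine $f$, but homogeneity kills a constant term as well. Comparing $t^\alpha$ with $t^j$, for $t$ near $1$ if you do not want to assume $\Gamma$ is a cone, and using $\sigma_j\not\equiv0$ on the open set $\Gamma$, leaves a single term. Ellipticity then excludes $F\equiv0$.

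The genuine gap is the identification of $\Gamma$ and the sign of $c_k$.
\begin{itemize}
\item Your claim that $\Gamma_k$ is the largest open convex cone containing $I$ on which $\sigma_k$ is elliptic, hence $\Gamma\subseteq\Gamma_k$, is false. Ellipticity does not degenerate on $\partial\Gamma_k$. For instance, $\sigma_1$ is uniformly elliptic on all of $\mathbb{R}^n$.
\item For $n=3$, $k=2$, consider the convex cone $\Gamma'=\{\lambda_i+\lambda_j>0,\ i\ne j\}$. There $\partial\sigma_2/\partial\lambda_i=\sigma_1(\lambda|i)>0$, yet $(1,1,-\tfrac{9}{10})\in\Gamma'$ has $\sigma_2=-\tfrac45$. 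Also, at $(1,1,-\tfrac12)\in\partial\Gamma_2$ all $\sigma_1(\lambda|i)>0$.
\item By the ``if'' part of Theorem \ref{main}, both $(\sigma_1,\mathbb{R}^n)$ and $(\sigma_2,\Gamma')$ satisfy the smooth ICP. So no argument using only ICP, ellipticity and convexity can yield $\Gamma\subseteq\Gamma_k$, let alone equality.
\item Likewise, ``$\Gamma$ contains the positive cone or at least $I$'' is an assumption, not a consequence of the paper's definitions; the paper explicitly does not use \eqref{el}. Without it, $c_k\ge0$ fails: $(-\sigma_2,-\Gamma_2)$ in $\mathbb{R}^2$, i.e.\ Monge--Amp\`ere on concave functions, satisfies ICP with $c_2<0$.
\end{itemize}

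So the last part of the statement can only come from a normalization, which you should state as a hypothesis rather than try to derive. One workable choice is to assume \eqref{el} and that $\Gamma$ is the component of $\{F>0\}$ containing $\Gamma_n$, as in the Caffarelli--Nirenberg--Spruck framework. For a cone, \eqref{el} gives $\Gamma_n\subset\Gamma$: for $\mu\in\Gamma_n$, $\lambda\in\Gamma$ and small $s>0$, we have $\mu=s\lambda+(\mu-s\lambda)\in\Gamma$. Ellipticity at $I$ then gives $c_k>0$. G\r{a}rding's description of $\Gamma_k$ as the component of $\{\sigma_k>0\}$ containing $(1,\dots,1)$ then gives $\Gamma=\Gamma_k$. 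This is the route you sketch in your final paragraph, and it is the one that works. Present it with the convention made explicit, and drop the false maximality claim.
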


The above proposition implies that homogeneous operators satisfying ICP are precisely the ones described by Bedford-Taylor \cite{BT82}, B\l ocki \cite{Bl06} and Trudinger-Wang \cite{TW97, TW99, TW02}.

In the non-homogeneous case such operators occur naturally, for example, in questions related to complex geometry. Namely, for a suitable constant $c>0$, the operator 
\begin{align}
    \sigma_n^{\C}-c\sigma_{n-k}^{\C}
\end{align} appears when rewriting the complex Hessian quotient equation with RHS equal to constant $c$. It was solved in \cite{FLM11} and in the case $k=1$ reduces to the $J$-equation (solved in \cite{SW08}). In relation to mirror symmetry the deformed  Hermitian Yang-Mills (dHYM) or Leung-Yau-Zaslow (LYZ) equation emerges. In the case of {\em critical phase} the equation can be rewritten, in dimension $n$, as 
\begin{align}
    \sum_{k=0}^{\lfloor (n-1)/2 \rfloor}(-1)^k\sigma^\cc_{n-1-2k}=0
\end{align}
in the particular case of $n=4$ we obtain
\begin{align}\label{dHYM4}
    \sigma^\cc_3-\sigma^\cc_1=0.
\end{align}
The set up was recently studied in \cite{FYZ26} where, among other results, \eqref{dHYM4} was solved. Importantly, during their analysis of the operator above the authors of \cite{FYZ26} encountered the need of proving the Liouville-type theorem (similar to \cite{DK17}) for the operator
\begin{align}\label{motiv}
    \s^\cc_n+\s^\cc_{n-1}.
\end{align}
The lack of pluripotential theory for the operator \eqref{motiv} and resulting difficulties with obtaining the mentioned Liouville type theorem was one of the motivations for the present work.
In the real setting, in analogy with dHYM equation, {\it the special Lagrangian equations} for {\em phase $\theta$} reduces to
\begin{align}
      \cos(\theta)
\sum_{k=0}^{\left\lfloor (n-1)/2 \right\rfloor}
(-1)^k \sigma_{2k+1}
-
\sin(\theta)
\sum_{k=0}^{\left\lfloor n/2 \right\rfloor}
(-1)^k \sigma_{2k}=0.
    \end{align}
Another class of examples of the form \eqref{affcomb}, but with variable coefficients, appears in \cite{GZ21}, where Guan and Zhang introduced generalizations of the equations for the Christoffel-Minkowski problem. Such general operators were already examined by Krylov in \cite{K95}.

In our considerations, we have assumed, for simplicity, that the ellipticity set $\Gamma$ is {\bf convex}. This is quite often the case in geometrically motivated Hessian operators. However, to our knowledge, there is no known characterization of the convexity of ellipticity sets for general operator of the form \eqref{affcomb}. A sufficient condition was discussed recently in \cite{L23, L23corr}.

The outline of the proof of Theorem \ref{main} is as follows. The crucial observation is that ICP forces the operator to be {\it rank-one affine} in the variational terminology.  This is somewhat surprising, as we deal only with inequalities rather than equalities as described above. Even more interestingly, the Hessian operator setting allows one to avoid the laborious algebraic argument from \cite{BCO81} that rank one affine operators are linear combinations of minors. Actually, the version of the argument we present results from an unsuccessful approach to simplify the general reasoning of \cite{BCO81} for that part. Hopefully this still can be achieved in the future.

The note is organized as follows: in Section 2 we collect the required notation and elementary properties of Hessian operators. In Section 3 we present the proof of Theorem \ref{main} and discuss the acknowledged consequences. 
\section*{AI disclosure}
No AI tools were used during the preparation of this manuscript.

\section{Preliminaries}

{\bf Notation}.
Throughout the note, we shall make use of Einstein summation over repeated indices. By $C$ we shall denote a constant depending only on the relevant parameters and its exact value may change line-to-line. By $dV$ we shall denote the integration against the Lebesgue measure, whereas $dS$ will stand for the integration against a surface measure (the surface used will be clear from the context, typically the boundary of a domain).

{\bf The ellipticity set $\G$}.
By $\mathcal S(n)$ and $\mathcal H(n)$ we shall denote the spaces of symmetric $n\times n$ matrices with real entries and, respectively, Hermitian matrices with complex entries. Given $A\in \mathcal S(n)$ or $A\in\mathcal H(n)$ by $\lambda_j(A)$, we shall denote its $j$-th eigenvalue (we arrange the eigenvalues in decreasing order). 
Hence, $\lambda_{j}(D^2 u)(x_0)$ denotes the $j$-th eigenvalue of the real Hessian for a smooth real valued function $u$ defined in a neighborhood of a point $x_0\in \rr^n$ whereas $\lambda_{j}(D^2_\C v)(z_0)$ denotes the $j$-th eigenvalue of the complex Hessian of a smooth function $v$ defined in a neighborhood of $z_0\in \cc^n$.

\begin{definition}\label{setgamma}
 An {\it ellipticity set} is any subset  $\G \subset \rn$ which is open, connected and symmetric with respect to permutation of the coordinates. If, additionally, $\G$ is convex we speak of convex ellipticity set. If, in turn, $\G$ is a {\it cone} we speak of ellipticity cone.
\end{definition}

Typical examples include the G\r{a}rding $k$-cones
\begin{align}
    \G_k=\lbr \lambda\in\rn \: | \:  \forall_{m\in \{1,\cdots,k\}} \: \sigma_m(\lambda)>0\rbr,
\end{align}
where
\begin{align}\label{sigmak}
    \s_k(\lambda)=\sum_{1\leq i_1<\cdots<i_m\leq n}\lambda_{i_1}\cdots\lambda_{i_n}
\end{align}
for $1 \le k \le n$ being the $k$'th elementary symmetric polynomial. As is well known, $\G_k$ are convex cones (see \cite{TW99}) with a vertex at the origin. Especially important will be the  {\it positive cone} 
\begin{eqnarray*}
    \G_n := \{ \lambda \in \mathbb{R}^n \: | \: \forall_{1\leq i \leq n}\:\lambda_i > 0 \},
\end{eqnarray*}
which coincides with the G\r{a}rding cone for $k=n$.

Other examples of convex ellipticity cones are
\begin{align}
\hat{\G}_p=\lbr \lambda\in\rn \: |\: \mathcal M_p(\lambda)>0\rbr    
\end{align}
where
\begin{align}
    \mathcal M_p(\lambda)=\prod_{1\leq i_1<\cdots<i_p\leq n}(\lambda_{i_1}+\cdots+\lambda_{i_p})
\end{align}
for $p\in \{1,\cdots,n\}$.

\begin{remark}
The connectedess assumption on $\G$ is crucial as example \ref{sigma3-sigma1} below shows. In our argument we shall make crucial use of convexity but we do not know whether it can be relaxed.
\end{remark}
 The following additional property of the ellipticity set $\G$  is typically assumed for Hessian PDEs, although we shall not use it in the note:
\begin{equation}\label{el}
for\ any\ \lambda\in\G\ and\ p\in\G_n\ we\ have\ \lambda+p\in \G.
\end{equation}

The role of this assumption will be clear once we discuss the ellipticity of non-linear operators and the associated set of admissible functions - see also Example \ref{sigma3+sigma1}.

\begin{example} The set 
\begin{align*}
    \tilde{\G}_n:=\lbr \lambda \in \mathbb{R}^n \: | \: \forall_{1\leq i \leq n} \: \lambda_i > -1 \rbr
\end{align*}
is a convex ellipticity set which is {\it not} a cone at the origin but satisfies the above assumption.
\end{example}

As is customary, associated with any ellipticity set $\G$ we consider the set of matrices in $\mathcal S(n)$ or $\mathcal H(n)$ which have their eigenvalues in $\G$. Abusing the notation, we denote these sets of matrices by $\Gamma_\mathcal S$ and $\Gamma_{\mathcal H}$, respectively. We observe that $\Gamma_\mathcal S$ and $\Gamma_\mathcal H$ are cones whenever $\G$ is. Analogously, $\Gamma_\mathcal S$ and $\Gamma_\mathcal H$ are convex for a convex set $\Gamma$.

{\bf $\G$-admissible functions}
To any such $\Gamma_\mathcal S$ or $\Gamma_\mathcal H$ one can associate a class of so called admissible functions.

\begin{definition}\label{gammaadmissible}
Fix an ellipticity set $\G$. Let $U$ be a domain in $\rn$ and let $u$ be a smooth $C^2$ function on $U$. $u$ is said to be $\G$-admissible if for every $x\in\ U$ the Hessian matrix $D^2u(x)$ belongs to $\Gamma_\mathcal S$. 
Likewise, for a domain $\Omega\subset \cc$ a $C^2$ smooth real valued function $v$ on $\Omega$ is $\Gamma$-admissible if for any $z\in \Omega$ we have $\lambda(D^2_\cc v)(z)\in \G$.  
\end{definition}

Examples include the set of (smooth) strictly convex/plurisubharmonic functions (for $\G=\G_n$) or more generally $k$-subharmonic/$k$-plurisubharmonic functions for $\G=\G_k$.

The efficiency of the integral comparison principle stems mainly from the fact that it is applicable to {\it weakly} admissible functions. These are defined in the following way.

\begin{definition}\label{weaklygammaadmissible}
 Let $\G$ and $U$ be fixed as above. An upper semicontinuous function $u$ in $U$ is said to be weakly $\G$-admissible if for any point $x_0\in U$ there is an open neighborhood $U_{x_0}\subset U$, containing $x_0$  and a sequence $(u_m)_{m=1}^\infty$ of smooth $\G$-admissible functions which decrease to $u$ in $U_{x_0}$. The definition for complex operators is analogous.
\end{definition}
{\bf Hessian operators}
We consider the following (constant coefficient) Hessian operators in this note.
\begin{definition}\label{Hessian}
Fix a set $\Gamma_\mathcal S\subset\mathcal S(n)$ associated to an ellipticity set $\G$. A real Hessian operator  $F$ is a function 
\begin{align*}
    F: \Gamma_\mathcal S\longmapsto \mathbb R,
\end{align*}
such that for $A\in \Gamma_\mathcal S$ its value depends symmetrically only on the eigenvalues $\lambda(A)$ of $A$. Formally,
for any $O\in O(n)$ we have $F(O^TAO)=F(A)$. The complex Hessian operator is defined analogously.
\end{definition}
In other words, we are considering the set-up when the operator $F$ factors (abusing the notation)
\begin{align}
    F(u):=F(D^2u)=:\tilde{f}(u)
\end{align}
through a symmetric function
\begin{align}
    \tilde{f}:\G \rightarrow \rr.
\end{align}
In this work, in the proof, it will be of importance that by a classical theorems in calculus any symmetric function of $\lambda\in\G$ factors through the {\it elementary symmetric polynomials} \eqref{sigmak}. In other words any Hessian operator $F$ we consider can be written as
\begin{equation}\label{reductionofHessian}
F(A)=f(\sigma_1(\lambda(A)),\cdots,\sigma_n(\lambda(A)))=:f\circ\sigma
\end{equation}
for some function $f$ defined on $\s(\G)$. Moreover, as follows, for example, from \cite{B84} smoothness of $\tilde{f}$ in $\Gamma$ implies smoothness of $f$ in $\s(\G)$.

{\bf Non-linear elliptic equations}
We consider only the weakest possible definition of ellipticity in the non-linear setting, i.e. through the {\it linearisation}.
\begin{definition}\label{weakellipticity}
Let $U\in\rn$ be a domain and let $u\in C^2(U)$ be any function. A (real) Hessian operator $F$ is said to be {\bf elliptic} (in the weak sense) at $u$ if the linearized operator
$$L_F:=F^{ij}(D^2u)\frac{\partial^2}{\partial x_i\partial x_j}$$
is elliptic, where $F^{ij}(D^2u(x)):=\frac{\partial F}{\partial u_{ij}}(D^2u(x))$.  
\end{definition}

In order to match the ellipticity of $F$ with the ellipticity set $\G$ it is natural to make the following

{\bf Assumption}: {\it $F$ is elliptic at any $\G$-admissible function $u$.}

We remark that in the Hessian PDE literature the ellipticity is typically defined using the following stronger definition.

\begin{definition}\label{strongellipticity}
Let $\G$, $F$ and $U$ be as above. Assume that (\ref{el}) hold. The pair $(F,\G)$ is said to be elliptic if for any smooth $\G$-admissible function $u$ on $U$, and $x\in U$ and any $0<P\in \mathcal S(n)$ we have
$$F(D^2u(x)+P)\geq F(D^2u(x)).$$
\end{definition}

\begin{remark}
    For a differentiable $F$ (and only such $F$'s will be considered in this note) the above condition easily forces the matrix $F^{ij}(D^2u)(x)$ to be positive semi-definite. Note also that (\ref{el}) implies that the matrix $D^2u(x)+P$ belongs to $\G_\mathcal S$. 
\end{remark}

\section{Operators supporting integral comparison principle}

For the proof of main theorem we will need the following well known fact for $\s_k$ operators, cf. Lemma 2.1 in \cite{G14}.
\begin{proposition}\label{sigmakdiv}
    The $\sigma_k$ operators for $1\leq k \leq n$ are divergence free, i.e. for any $1\le i \le n$ and any smooth symmetric matrix valued function $A$ in $\O$
    \begin{align}
        \left(\s_k^{ij}(A(x))\right)_j=0.
    \end{align}
\end{proposition}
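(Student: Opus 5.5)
We must prove that $\sigma_k^{ij}$ is divergence free: $\partial_j\big(\sigma_k^{ij}(D^2u)\big)=0$. As stated, for an arbitrary matrix-valued $A(x)$ this is false, so the intended setting is $A=D^2u$ for a smooth function $u$, or at least a field with symmetric derivatives $\partial_l A_{ij}=\partial_i A_{lj}$. I will prove it under that hypothesis and use only this symmetry.

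The plan is to write $\sigma_k$ of a symmetric matrix via the generalized Kronecker delta:
\begin{align*}
\sigma_k(A)=\frac{1}{k!}\,\delta^{i_1\cdots i_k}_{j_1\cdots j_k}A_{i_1j_1}\cdots A_{i_kj_k},
\end{align*}
which is the sum of principal $k\times k$ minors, hence equals $\sigma_k(\lambda(A))$. Differentiating and using the antisymmetry of $\delta$ to identify the $k$ resulting terms gives
\begin{align*}
\sigma_k^{ij}(A)=\frac{1}{(k-1)!}\,\delta^{i\, i_2\cdots i_k}_{j\, j_2\cdots j_k}A_{i_2j_2}\cdots A_{i_kj_k}.
\end{align*}
This expression is symmetric in $(i,j)$ once $A$ is symmetric. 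Differentiating in $x_j$ and summing over $j$ gives
\begin{align*}
\partial_j\sigma_k^{ij}=\frac{1}{(k-2)!}\,\delta^{i\, i_2\cdots i_k}_{j\, j_2\cdots j_k}\,\partial_j A_{i_2j_2}\,A_{i_3j_3}\cdots A_{i_kj_k},
\end{align*}
where the $k-1$ terms from the product rule coincide after relabeling pairs $(i_m,j_m)$; this uses the symmetry of $\delta$ under simultaneous swaps of upper and lower index pairs.

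The key step is the following. For $A=D^2u$ we have $\partial_jA_{i_2j_2}=u_{i_2j_2j}$, which is symmetric in $j$ and $j_2$. Meanwhile $\delta^{i\,i_2\cdots}_{j\,j_2\cdots}$ is antisymmetric in the lower indices $j,j_2$. The full contraction therefore vanishes, and the case $k=1$ is trivial since $\sigma_1^{ij}=\delta_{ij}$.

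The main obstacle is bookkeeping in the derivative formula. The cleanest check is that the formal derivative with respect to $A_{ij}$, taken with $A_{ij}$ and $A_{ji}$ as independent variables, agrees with the symmetric convention used for $F^{ij}$ in Definition \ref{weakellipticity}. Because the displayed $\sigma_k^{ij}$ is already symmetric, the two conventions coincide, and the argument goes through.
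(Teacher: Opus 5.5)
Your proof is correct, and it is not really competing with an argument in the paper: the paper gives no proof of Proposition \ref{sigmakdiv} and simply defers to Lemma 2.1 of \cite{G14}. Your generalized Kronecker delta computation (the classical Reilly-type argument) is a complete, self-contained substitute. The combinatorial factors $\frac{1}{(k-1)!}$ and $\frac{1}{(k-2)!}$ are right, and the symmetry of $\sigma_k^{ij}$ at symmetric $A$ settles the derivative convention. The structural input enters at exactly one point: the lower pair $(j,j_2)$ is antisymmetric in $\delta$ and is contracted against $u_{i_2j_2j}$, which is symmetric in $(j,j_2)$.

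You are also right that the proposition as literally stated, for an arbitrary smooth symmetric $A(x)$, is false. For $k=2$ one has $\sigma_2^{ij}(A)=\mathrm{tr}(A)\delta_{ij}-A_{ij}$, so
\begin{align*}
\partial_j\sigma_2^{ij}=\partial_i\,\mathrm{tr}A-\partial_jA_{ij}.
\end{align*}
For $A(x)=\mathrm{diag}(0,x_1)$ on $\mathbb{R}^2$ and $i=1$ this equals $1$. The correct hypothesis is that $\partial_lA_{ij}$ be totally symmetric (a Codazzi-type condition), in particular $A=D^2w$.

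This restricted version is all the paper ever uses. The boxed term in \eqref{structineq} has $A=D^2u$, and the integration by parts in the \emph{if} part of Theorem \ref{main} has $A=D^2(tu+(1-t)v)$. Your route therefore buys a correct statement with the needed hypothesis made explicit, at the cost of index bookkeeping that the citation avoids.
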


\begin{proof}[Proof of Theorem \ref{main}]

For the proof of the {\em if} part assume that the operator $F$ is as in \eqref{affcomb} and take $u$ and $v$ as in Definition \ref{ICPdef}. Note that as explained in the introduction, if $\{u<v\} \Subset \O$ then by considering $u+\epsilon$ and taking the limit we may assume that the boundary $\partial \{u<v\}$ is smooth. Thus we calculate
\begin{align}
    \uv F(D^2u)-F(D^2v) = \uv \is \frac{\partial}{\partial t} F\left(tD^2u+(1-t)D^2v\right)dtdV \\
    \nonumber =\uv \is F^{ij}(tD^2u+(1-t)D^2v)(u-v)_{ij}dtdV \\ \nonumber
    =\is \puv F^{ij}(tD^2u+(1-t)D^2v)(u-v)_{\eta}\eta_i\eta_jdSdt\geq 0,
\end{align}
where third equality above follows from integration by parts and the assumption that $F$ is a linear combination of $\s_k$ operators coupled with Proposition \ref{sigmakdiv}. Note that in the argument the assumption that $\G$ is convex was crucial. 

For the proof of the {\em only if} part, take any $x \in \rn$, fix a small $r$ and consider $B=B_r(x) \Subset \rr^n$. Then for any non-negative test function $\varphi \in \mathcal{C}^\infty_0(B)$, sufficiently small $s$'s and any admissible $u$ we consider admissible $v_s=u+s\varphi$.

From ICP it is obvious that whenever $s_1\le s_2$ then:
 \begin{eqnarray}
        0 \le \int_{\{v_{s_1}<v_{s_2}\}} F(v_{s_1})-F(v_{s_2}) \: dV = \int_B F(v_{s_1})-F(v_{s_2})\: dV. 
    \end{eqnarray}
    This results in the function
    \begin{eqnarray}
        t \mapsto\ib F(v_t)\: dV.
    \end{eqnarray}
    being non increasing around $0$. Taking the derivative we obtain 
    \begin{align}
        0 \geq \frac{\pa}{\pa t} \left.\left( \ib F(u+t\varphi) \: dV\right)\right|_{t=0}=\ib \left(F^{ij}(u)\right)_{ij}\varphi \:dV.
    \end{align}
    As the inequality holds for every nonnegative $\varphi \in \mathcal{C}^\infty_0(B)$ we obtain 
    \begin{align}
    \left(F^{ij}(u)\right)_{ij}(x)\leq 0.
\end{align}

    We expand the left-hand side of this inequality, suppressing the argument when unnecessary, for any $x \in \O$, $A \in \G$ and admissible $u$ such that $D^2u(x)=A$, with boxed term vanishing by~\eqref{sigmakdiv}:
    \begin{eqnarray}\label{structineq}
        (F^{ij}(u(x)))_{ij}= (f_{\al\bet} \s_\al^{ij}\s_\bet^{kl}u_{kli}+\boxed{f_\al (\s_\al^{ij})_i})_j \\ \nonumber
        =  f_{\al\bet \g}(\sigma_1(A), \dots, \s_n(A)) \s_\al^{ij}(A)\s_\bet^{kl}(A)\s_\g^{pq}(A)u_{kli}(x)u_{pqj}(x)
        \\ \nonumber + f_{\al\bet} (\s_\al^{ij})_j\s_\bet^{kl}u_{kli}
        \\ \nonumber + f_{\al\bet}(\sigma_1(A), \dots, \s_n(A)) \s_\al^{ij}(A)\s_\bet^{kl,pq}(A)u_{kli}(x)u_{pqj}(x)
        \\ \nonumber +f_{\al\bet}(\sigma_1(A), \dots, \s_n(A)) \s_\al^{ij}(A)\s_\bet^{kl}(A)u_{klij}(x)\leq 0
    \end{eqnarray}
    Let us take some admissible diagonal matrix $D$, any $a \in \rr^n$ and some small enough $\varepsilon \ll \| a \|$ to construct an admissible function defined locally as 
    \begin{align}\label{fourthordpol}
    u(x)=x^T D x+ \frac\varepsilon2  \left( \frac{a_i}{2} x^2_i\right)^2.    
    \end{align} 
    We may assume that $x = 0$ and then the above inequality reduces to:
    \begin{eqnarray}\label{halfvanish}
        f_{\al\bet}(\sigma_1(D), \dots, \s_n(D)) \cdot \left(\s_\al^{ii}(D)\s_\bet^{jj}(D)+\s_\al^{ij}(D)\s_\bet^{ij}(D)\right) \varepsilon a_i a_j \leq 0   
    \end{eqnarray}
    but since $\varepsilon$ can have any sign \eqref{halfvanish} becomes equality. Since $D$ was taken to be diagonal $\s_\al^{ij}$ must also vanish for $i \neq j$ and we are left with
    \begin{align}\label{quadformvan}
        f_{\al\bet}\left(\sigma_1(D), \dots, \s_n(D)\right)  \s_\al^{ii}(D)\s_\bet^{jj}(D)a_{i}a_{j} =0.
    \end{align}
    If we take the eigenvalues of $D$ to be pairwise distinct, then from Lemma \ref{vdmlike} it follows that the matrix $\s_i^{jj}$ is invertible for any such $D$. We are inferring from \eqref{quadformvan} the vanishing of the form $\s_\al^{ii}f_{\al \bet}\s_\bet^{jj}$. This of course induces vanishing of $f_{\al \bet}\left(\sigma_1(D), \dots, \s_n(D)\right) $ and since $D$'s form a dense set in $\G$ up to $O(n)$ transformations we obtain 
    \begin{align}
        f_{\al \bet}\left(\s_1(A),\dots,\s_n(A)\right) = 0
    \end{align} for any $A \in \G$, thus $f$ is an affine function in $\s(\G)$.
\end{proof}

\begin{remark}
    We would like to underline that, surprisingly, in the proof above, the convexity of $\G$ is needed only in the proof of {\em if} part. In fact, what we prove here in the {\em if} part is a little stronger statement than Theorem~\ref{main} -- any two admissible functions $u,v$ satisfy ICP (with $\G$ non necessarily convex) provided $F$ is of the form~\eqref{affcomb} , $\{ u < v \} \Subset \Omega$ and
    \[
    tD^2u(x) + (1-t)D^2v(x) \in \G \qquad \forall \, x \in \{ u < v \} \,\, \forall \, t \in [0,1].
    \]
\end{remark}

\begin{remark}
    The proof of Theorem \ref{main} in a complex setting is virtually the same. This time we consider the hermitian linearization of the operator, i.e. $\partial F/ \partial u_{i\bar{j}}$. The analogue of the Proposition~\ref{sigmakdiv} holds for both holomorphic and anti-holomorphic derivatives $\partial_i$ and $\bar\partial_j$. Finally, the function we consider instead of~\eqref{fourthordpol} is
    \[
    u(z) = z^*Dz + \frac\varepsilon2 \left( \frac{a_i}{2} |z_i|^2 \right)^2,
    \]
    with $a \in \rr^n$.
\end{remark}

\begin{lemma}\label{vdmlike}
    Let $D$ be a real diagonal matrix with single eigenvalues, then the matrix 
    \begin{align}
        [\s_i^{jj}(D)]_{i,j=1}^n
    \end{align} is invertible.
\end{lemma}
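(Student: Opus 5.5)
For a diagonal $D=\mathrm{diag}(\lambda_1,\dots,\lambda_n)$ the derivative $\s_i^{jj}(D)=\frac{\partial\s_i}{\partial\lambda_j}(\lambda)$ is $\s_{i-1}(\lambda|j)$. Here $\s_{i-1}(\lambda|j)$ denotes the $(i-1)$-st elementary symmetric polynomial of the $n-1$ variables obtained by deleting $\lambda_j$, with the convention $\s_0=1$. The plan is to encode these numbers in a generating polynomial and reduce invertibility to the invertibility of a Vandermonde-type matrix.

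First I will check the identification of the entries. Off-diagonal perturbations of a diagonal matrix do not affect $\s_i$ to first order in the diagonal entries, so differentiating $\s_i(D+tE_{jj})$ gives $\s_{i-1}(\lambda|j)$. Next I will use the identity
\begin{align*}
\prod_{k\neq j}(1+t\lambda_k)=\sum_{i=1}^{n}\s_{i-1}(\lambda|j)\,t^{i-1}.
\end{align*}
It shows that the $j$-th column of $M=[\s_i^{jj}(D)]$ is exactly the coefficient vector, in the monomial basis $1,t,\dots,t^{n-1}$, of the polynomial $p_j(t)=\prod_{k\neq j}(1+t\lambda_k)$. So $M$ is invertible if and only if $p_1,\dots,p_n$ are linearly independent in the space of polynomials of degree at most $n-1$.

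To prove the independence, I substitute $s=-1/t$, or equivalently pass to $q_j(s)=\prod_{k\neq j}(s-\lambda_k)$. These are the Lagrange-type polynomials for the nodes $\lambda_1,\dots,\lambda_n$. Indeed, $q_j(t)=t^{n-1}p_j(-1/t)\cdot(-1)^{n-1}$, and the map $p(t)\mapsto t^{n-1}p(-1/t)$ is a linear isomorphism of the polynomials of degree $\le n-1$. Now suppose $\sum_j c_jq_j=0$ and evaluate at $s=\lambda_m$. Every term with $j\neq m$ vanishes, leaving $c_m\prod_{k\neq m}(\lambda_m-\lambda_k)=0$. Since the eigenvalues are pairwise distinct, $c_m=0$.

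Alternatively, one can write $\det M$ directly as $\pm\prod_{k<l}(\lambda_k-\lambda_l)$. This follows from the polynomial argument: $\det M$ is a polynomial of degree $n(n-1)/2$ that is alternating in $\lambda$, and its leading coefficient can be checked.

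There is no real obstacle here. The only points needing care are the identification $\s_i^{jj}(D)=\s_{i-1}(\lambda|j)$ and the index conventions, with rows indexed by $i$ running from $1$ to $n$ and the entry $\s_0=1$ sitting in the first row.
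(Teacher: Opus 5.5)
Your proof is correct and follows essentially the same route as the paper. Both arguments identify $\s_i^{jj}(D)=\s_{i-1}(\lambda|j)$ and recognize the columns, after reversing coefficients, as the unnormalized Lagrange polynomials $\prod_{k\neq j}(t-\lambda_k)$ at distinct nodes. The paper phrases this as the matrix being a rescaled, row-permuted inverse Vandermonde matrix, whereas you check linear independence directly by evaluating at the nodes, which makes the index and scaling bookkeeping cleaner. One harmless slip: in fact $q_j(t)=t^{n-1}p_j(-1/t)$ exactly, with no factor $(-1)^{n-1}$, and this does not affect linear independence.
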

\begin{proof}
    Note that
    \begin{align}\label{invvan}
        \s_i^{jj}(D)=\frac{\partial \s_i}{\partial \lambda_j}(\lambda_D)=\s_{i-1}(\lambda_D|j)
    \end{align}
    where $\lambda_D$ denotes the vector of eigenvalues of $D$ and $\s_{i-1}(\lambda_D|j)$ denotes the operator $\s_{i-1}$ applied to the vector $\lambda_D$ with the $j$ 'th entry omitted.

    On the other hand, the coefficients of the polynomials
    \begin{align}\label{invvanexp}
        p_j(t)=\prod_{\substack{1 \le k \le n \\ k \not = j}} \frac{t-\lambda_k}{\lambda_j-\lambda_k}=\sum_{i = 1}^n\frac{(-1)^{n-1-(i-1)}\s_{n-1-(i-1)}(\lambda_D|j)}{\prod_{k \not = j}(\lambda_j-\lambda_k)}t^{i-1}
    \end{align}
are the entries of the inverse matrix $V^{-1}(\lambda_D)$ to the Vandermonde matrix $V(\lambda_D)$ in variables $\lambda_D$. Thus, the matrix \eqref{invvan}, up to the permutation and multiplication by factors in \eqref{invvanexp} of the rows, is seen to be the inverse of $V(\lambda_D)$.
\end{proof}

\section{Examples}

In this section we list several examples that showcase the main result as well as the necessity of its assumptions.

\begin{example}
    Let us take the critical phase operator~\eqref{dHYM4} and calculate its ellipticity locus $\G$. We get
    \[
    \sigma_2\left(\lambda| j\right) - 1 > 0 \quad \text{ for } j = 1,\ldots, 4. 
    \]
    It is easy to see that this set is not convex, since it is not connected. However, if we consider only its positive component, the situation changes. Taking $i =1$, we have
    \begin{align*}
    \sigma_2(\lambda|1) &= \lambda_2 \lambda_3 + \lambda_2 \lambda_4 + \lambda_3 \lambda_4 \\
    &= \frac{1}{2}\left( \left( \lambda_2 + \lambda_3 + \lambda_4 \right)^2  -  \lambda_2^2 - \lambda_3^2 - \lambda_4^2 \right)
    \end{align*}
    Denoting $t = \lambda_2 + \lambda_3 + \lambda_4 $ the set $\sigma_2\left( \lambda|1 \right)$ is
    \[
    t^2 -  \lambda_2^2 - \lambda_3^2 - \lambda_4^2 > 2,
    \]
    which has two components. If we take the one with $t >0$ we end up with
    \[
    \Lambda_1 := \left\{ (\lambda_1, \lambda_2, \lambda_3, \lambda_4) \: | \: \lambda_2 + \lambda_3 + \lambda_4  > \sqrt{2 + \lambda_2^2 + \lambda_3^2 + \lambda_4^2} \,\right\}
    \]
    which is a sublevel set of a convex function and thus convex. Therefore, the component $\G_+$ of $\G$ that contains $(1,1,1,1)$ is convex, being an intersection of convex sets, which by Corollary~\ref{maincomplex} implies that $(\sigma_3^\cc - \sigma_1^\cc, \G_+)$ satisfies ICP.
\end{example}

\begin{example}\label{sigma3-sigma1}Consider the operator 
\begin{align}
    F(D^2u)=\sigma_3(\lambda)-\sigma_1(\lambda)
\end{align} 
acting on functions in $\mathbb R^3$. The ellipticity condition, in terms of the eigenvalue vector $\lambda=\lambda(D^2u)$ is
\begin{align}
    \forall_{1\leq i<j\leq3}\ \ \  \lambda_i\lambda_j-1>0.
\end{align}
It is easy to see that the ellipticity set consists of two components. The functions 
\begin{align*}
    v=-(1+\varepsilon)\frac{|x|^2}{2}+1 \:{\rm and}\: u=(1+\varepsilon)\frac{|x|^2}{2}
\end{align*} (with $\varepsilon>0$) are both admissible. In this case $\lbr u<v\rbr$ is a ball centered at the origin. Note that 
\begin{align*}
    F(D^2v)=2-3\varepsilon^2-\varepsilon^3
\end{align*} which is strictly larger than 
\begin{align*}
    F(D^2u)=-2 +3\varepsilon^2+\varepsilon^3,
\end{align*} provided that $\varepsilon$ is chosen small enough. Clearly, the ICP fails for the pair $u$ and $v$.
\end{example}

The example above is possible because we have chosen functions with eigenvalues with different components of $\Gamma$. 

That (a component of) $\Gamma$ may fail to be convex is evidenced by the following example

\begin{example}\label{sigma3+sigma1}Consider the operator 
\begin{align}
    F(D^2u)=\sigma_3(\lambda)+\sigma_1(\lambda)
\end{align} 
acting on functions in $\mathbb R^3$. The ellipticity condition, in terms of the eigenvalue vector $\lambda=\lambda(D^2u)$ is
\begin{align}
    \forall_{1\leq i<j\leq3}\ \ \ \lambda_i\lambda_j>-1.
\end{align}
It is easy to see that the ellipticity set is connected - it is star-shaped with respect to the origin. Note that in this case $\Gamma$ also fails to meet assumption (\ref{el}).
\end{example}

\end{document}